\title{A contour-integral based QZ algorithm for generalized eigenvalue problems}
\author{Guojian Yin\thanks{Shenzhen Institutes of Advanced Technology, Chinese Academy of Sciences, Shenzhen, P.R. China ({\tt guojianyin@gmail.com}).} }
\begin{document}
\maketitle

\begin{abstract}
Recently, a kind of eigensolvers based on  contour integral were developed for computing the eigenvalues inside a given region in the complex plane. The CIRR method is a classic example among this kind of methods. In this paper, we propose a contour-integral based QZ method which is also devoted to computing partial spectrum of  generalized eigenvalue problems. Our new method takes advantage of the technique in the CIRR method of constructing a particular subspace containing the eigenspace of interest via contour integrals. The main difference between our method and CIRR is the mechanism of extracting the desired eigenpairs. We establish the related framework and address some implementation issues so as to make the resulting method applicable in practical implementations. Numerical experiments are reported to illustrate the numerical performance of our new method.
\end{abstract}

\begin{keywords}
generalized eigenvalue problems, contour integral, QZ method, generalized Schur decomposition
\end{keywords}

\begin{AMS}
15A18, 58C40, 65F15
\end{AMS}

\section{Introduction}
Let $A$ and $B$ be large $n\times n$ matrices. Assume that we have a generalized eigenvalue problem 
\begin{equation}\label{eq:1-1}
A{\bf x} = \lambda B{\bf x},
\end{equation}
and want to compute the eigenvalues $\lambda_i$, along with their eigenvectors ${\bf x}_i$, of (\ref{eq:1-1}) inside a given region in the complex plane.
This problem arises in various areas of scientific and engineering applications,
for example in the model reduction of a linear dynamical system, one needs to know the response over a range of frequencies, see \cite{BDDRV00, GGD96, Ruhe98}. Computing a number of interior eigenvalues of a large problem remains one of the most difficult problems in computational linear algebra today \cite{FS12}. In practice, the methods of choice are always based on the projection techniques, the key to the success of which is to construct an approximately invariant subspace enclosing the eigenspace of interest.  The Krylov subspace methods in conjunction with 
 the spectral transformation techniques, such as the shift-and-invert technique, are most often used \cite{saad, stewart}.

Recently, the eigensolvers based on contour integral were developed to compute the eigenvalues inside a prescribed domain in the complex plane. The best-known methods of this kind are the Sakurai-Sugiura (SS) method \cite{ss} and the FEAST algorithm \cite{polizzi}. A major computational advantage of these contour-integral based methods is that they can be easily implemented in modern distributed parallel computers \cite{AT14, ISN10}. The FEAST algorithm works under the conditions that matrices $A$ and $B$ are Hermitian and $B$ is positive definite. In the SS method, the original eigenproblem (\ref{eq:1-1}) is reduced to a small one with Hankel matrices, if the number of sought-after eigenvalues is small. However, since Hankel matrices are usually ill-conditioned \cite{BGL07}, the SS method always suffers from numerical instability \cite{AT14, ST07}. By noticing this fact,  later in \cite{ST07}, Sakurai {\it et al.}  used the Rayleigh-Ritz procedure to replace the Hankel matrix approach to get a more stable algorithm called CIRR.

Originally, the CIRR method was formulated under the assumptions that matrices $A$ and $B$ are Hermitian and $B$ is positive definite, i.e., (\ref{eq:1-1}) is a Hermitian problem \cite{BDDRV00}. Moreover, it is required that the eigenvalues of interest are distinct. In \cite{IS10}, the authors adapted the CIRR method to non-Hermitian cases; meanwhile, they presented a block version of the CIRR method so as to deal with the degenerate systems.

The CIRR method is always accurate and powerful. It first constructs a subspace containing the eigenspace of interest through a sequence of particular contour integrals. Then the orthogonal projection technique is used to extract desired  eigenpairs.  In our work, we propose a contour-integral based QZ method for solving partial spectrum of (\ref{eq:1-1}). The motivation stems from the attempt of using the oblique projection method, instead of the orthogonal one, to extract desired eigenpairs in the CIRR method. When using oblique projection technique, the most important task is to find an appropriate left subspace, we borrow ideas of the JDQZ method \cite{FSV98}, and derive our new method. We establish the related mathematical framework. Some implementation issues will also be discussed before giving the resulting algorithm.

The rest of the paper is organized as follows. In Section 2,
we briefly review  the CIRR method \cite{ST07}. In Section 3, we derive a contour-integral based QZ method and establish the related mathematical framework. Then we will discuss some implementation issues and present the complete algorithm. Numerical experiments are reported in Section 4 to illustrate the numerical performance of our new method.

Throughout the paper, we use the following notation and terminology.
The subspace spanned by the columns of matrix $ X$ is
denoted by ${\rm span}\{ X\}$. The rank of  matrix $A$ is denoted by $\rank(A)$.
For any matrix $S$, we denote the submatrix
that lies in the first $i$ rows and the first $j$
columns of $S$ by $ S_{(1:i,1:j)}$, the submatrix consisting
of the first $j$ columns of $S$ by $ S_{(:,1:j)}$, and the submatrix
consisting of the first $i$ rows of $S$ by $ S_{(1:i,:)}$. The algorithms are presented in \textsc{Matlab} style. 

\section{The CIRR method}
In \cite{ss}, Sakurai {\it et al.} used a moment-based technique to formulate a contour-integral based method, i.e., the SS method, for finding the eigenvalues of (\ref{eq:1-1}) inside a given region. In order to improve the numerical stability of the SS method, a variant of it used the Rayleigh-Ritz procedure to extract desired  eigenpairs. This leads to the so-called CIRR method \cite{IS10, ST07}. Originally the CIRR method was derived in \cite{ST07} under the assumptions that (i) matrices $A$ and $B$ are Hermitian with $B$ being positive definite, and (ii) the eigenvalues inside the given region are distinct. In \cite{IS10}, the authors adapted the CIRR method to the non-Hermitian cases, meanwhile, a block version was proposed to deal with the degenerate problems. In this section we give a briefly review of the block CIRR method.

The matrix pencil $ z B-A$ is regular if ${\rm det}(zB-A)$ is not identically zero for all $z \in \mathbb{C}$ \cite{Lapack, demmel}. The  Weierstrass canonical form of  regular matrix pencil $zB-A$ is defined as follows. 

\begin{theorem} [\cite{G59}]
Let $zB-A$ be a regular matrix pencil of order $n$. Then there exist nonsingular matrices
$S$ and $T \in \mathbb{C}^{n\times n}$ such that
\begin{equation}\label{eq:8-3-1}
TAS = \begin{bmatrix}
  J_d    & 0   \\
   0   & I_{n-d}
\end{bmatrix}  \quad {\rm and} \quad TBS= \begin{bmatrix}
   I_d   & 0   \\
  0    & N_{n-d}
\end{bmatrix},
\end{equation}
where $J_d$ is a $d\times d$ matrix in Jordan canonical form
with its diagonal entries corresponding to the eigenvalues of $zB-A$, $N_{n-d}$ is an $(n-d)\times (n-d)$ nilpotent matrix also in Jordan canonical form, and $I_d$ denotes the identity matrix of order $d$.
\end{theorem}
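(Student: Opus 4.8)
The plan is to exploit the freedom in the spectral parameter granted by regularity and reduce the statement to the ordinary Jordan canonical form of a single matrix. Since $zB-A$ is regular, $\det(zB-A)$ is a polynomial in $z$ that does not vanish identically, so there is a shift $\mu\in\mathbb{C}$ for which $P:=\mu B-A$ is nonsingular. Writing $z=w+\mu$ and factoring out $P$ on the left, I would use the identity
\begin{equation*}
zB-A=(w+\mu)B-A=wB+P=P\bigl(w P^{-1}B+I_n\bigr),
\end{equation*}
which replaces the pencil $zB-A$ by the much simpler pencil $w\widetilde B+I_n$ with $\widetilde B:=P^{-1}B$, at the cost of the known nonsingular left factor $P$.

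Next I would bring $\widetilde B$ to Jordan form. By the ordinary Jordan decomposition there is a nonsingular $Q$ with $Q^{-1}\widetilde B Q=\operatorname{diag}(\mathcal J_1,\mathcal J_0)$, where $\mathcal J_1$ collects the Jordan blocks with nonzero eigenvalues (hence is invertible) and $\mathcal J_0$ collects those with eigenvalue $0$ (hence is nilpotent). The key observation is that this split of $\widetilde B$ into its invertible and nilpotent parts is precisely the split of the pencil into its finite and infinite eigenvalues. For the invertible block I would factor $w\mathcal J_1+I_d=\mathcal J_1\bigl(zI_d-(\mu I_d-\mathcal J_1^{-1})\bigr)$ and then apply a further similarity to put $G:=\mu I_d-\mathcal J_1^{-1}$ into Jordan form $J_d$; its eigenvalues $\mu-1/\nu$, with $\nu$ the nonzero eigenvalues of $\widetilde B$, are exactly the finite eigenvalues of $zB-A$. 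For the nilpotent block I would factor $w\mathcal J_0+I_{n-d}=(I_{n-d}-\mu\mathcal J_0)\bigl(z\,(I_{n-d}-\mu\mathcal J_0)^{-1}\mathcal J_0+I_{n-d}\bigr)$; since $\mathcal J_0$ is nilpotent, $I_{n-d}-\mu\mathcal J_0$ is invertible and $(I_{n-d}-\mu\mathcal J_0)^{-1}\mathcal J_0$ is again nilpotent, which after a sign change and a last similarity becomes the nilpotent Jordan matrix $N_{n-d}$.

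Collecting all the left factors into $T^{-1}$ and all the right factors into $S$, these manipulations give $T(zB-A)S=\operatorname{diag}(zI_d-J_d,\;zN_{n-d}-I_{n-d})$, and matching the coefficients of $z^1$ and $z^0$ yields $TAS=\operatorname{diag}(J_d,I_{n-d})$ and $TBS=\operatorname{diag}(I_d,N_{n-d})$, as required; the nonsingularity of $T$ and $S$ is automatic since each assembled factor is a product of nonsingular matrices. I expect the main obstacle to be the bookkeeping that keeps the two blocks decoupled throughout: one must check that the similarity bringing $G$ to Jordan form acts only on the top block and that the one bringing the nilpotent part to Jordan form acts only on the bottom block, so that the off-diagonal zeros and the identity blocks are preserved. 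A secondary point worth verifying is that the diagonal entries of $J_d$ really are the roots of $\det(zB-A)$; this follows because $\det(zN_{n-d}-I_{n-d})=(-1)^{n-d}$ is a nonzero constant, so the zeros of $\det(zB-A)$ coincide with those of $\det(zI_d-J_d)$.
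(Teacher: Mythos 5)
Your proof is correct, but there is nothing in the paper to compare it against: the paper states this result as a classical theorem and simply cites Gantmacher's \emph{The Theory of Matrices} without reproducing any argument. Your construction is in fact the standard proof of the Weierstrass canonical form, essentially the one found in the cited reference: pick $\mu$ with $\mu B-A$ nonsingular (possible by regularity), pass to the single matrix $\widetilde B=(\mu B-A)^{-1}B$, split its Jordan form into the invertible part (which, via $G=\mu I_d-\mathcal J_1^{-1}$, carries the finite eigenvalues) and the nilpotent part (which becomes $N_{n-d}$ and accounts for the eigenvalues at infinity). All the individual steps check out: the factorizations $w\mathcal J_1+I_d=\mathcal J_1\bigl(zI_d-(\mu I_d-\mathcal J_1^{-1})\bigr)$ and $w\mathcal J_0+I_{n-d}=(I_{n-d}-\mu\mathcal J_0)\bigl(z(I_{n-d}-\mu\mathcal J_0)^{-1}\mathcal J_0+I_{n-d}\bigr)$ are algebraically correct, $(I_{n-d}-\mu\mathcal J_0)^{-1}\mathcal J_0$ is indeed nilpotent (it is a polynomial in $\mathcal J_0$ with zero constant term), the block-diagonal similarities preserve the decoupled structure as you note, and matching the coefficients of $z^1$ and $z^0$ in $T(zB-A)S$ legitimately yields the two stated identities, with the determinant argument $\det(zN_{n-d}-I_{n-d})=(-1)^{n-d}$ confirming that the diagonal of $J_d$ exhausts the roots of $\det(zB-A)$.
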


Let $J_d$ in (\ref{eq:8-3-1}) be of the form
\begin{equation}\label{equ:7-17-1}
J_d =   \left[ \begin{array}{cccc}
 J_{d_1}(\lambda_1) & 0 & \cdots & 0\\
0 &  J_{d_2}(\lambda_2) & \cdots & 0\\
\vdots & \vdots& \ddots &  \vdots\\
0 & 0 & \cdots &  J_{d_m}(\lambda_m)
\end{array}
\right],
\end{equation}
where $\sum_{i=1}^m d_i  =d$, $1 \le d_i\le d$ for $i=1,\ldots m$ and
$ J_{d_i}(\lambda_i)$ are $d_i \times d_i$ matrices of the form
$$\begin{array}{ccc}
 J_{d_i}(\lambda_i) = \left[ \begin{array}{ccccc}
\lambda_i & 1 & 0& \cdots  & 0\\
0 & \lambda_i & 1 & & \vdots\\
 & \ddots & \ddots & \ddots &0\\
\vdots & & \ddots &\ddots &  1\\
0 &  \cdots && 0&\lambda_i
\end{array}
\right], & & i = 1, 2, \ldots, m
\end{array}
$$
with $\lambda_i$ being the eigenvalues. Here the $\lambda_i$
are not necessarily distinct and can be repeated according to
their multiplicities.

Let us partition $S$ into block form 
\begin{equation}\label{eq:3-4-1}
S = [S_1, S_2, \ldots, S_ m, S_{m+1}],
\end{equation}
where $S_i\in \mathbb{C}^{n\times d_i}$, $1\le i\le  m$, and $S_{m+1}\in \mathbb{C}^{n\times (n-d)}$. Then the first column in each $S_i$ is an eigenvector associated with eigenvalue $\lambda_i$ for $i = 1,\ldots, m$ \cite{BDDRV00, IS10, ISN10, YCY14}.

Let $\Gamma$ be a given positively
oriented simple closed curve in the complex plane. Below we show how to use the block CIRR method to compute the eigenvalues of (\ref{eq:1-1}) inside $\Gamma$, along with their associated eigenvectors.  Without loss of generality, let the set of eigenvalues of (\ref{eq:1-1}) enclosed by $\Gamma$ be
$\{\lambda_1, \ldots, \lambda_l\}$, and $s: = d_1+d_2+\cdots + d_{l}$ be the number of eigenvalues inside $\Gamma$ with multiplicity taken into account. 

Define the contour integrals 
\begin{equation} \label{eq:2-2-21}
 F_k:= \dfrac{1}{2\pi \sqrt{-1}}\oint_{\Gamma}z^k (z B- A)^{-1} Bdz,\quad k=0,1, \ldots. 
\end{equation}
With the help of residue theorem in complex analysis \cite{Rudin}, it was shown in \cite{ISN10} that 
\begin{equation}\label{eq:2-19-1}
F_k = S_{(:,1:s)}(J_{(1:s, 1:s)})^k(S^{-1})_{(1:s,:)}, \quad k = 0, 1, \ldots.
\end{equation}
Let $h$ and $g$ be two positive integers satisfying $hg\geqslant s$, and $Y$ be an $n\times h$ random matrix. Define 
\begin{equation}\label{eq:4-3-1}
U_k := F_k Y,\  k = 0,\ldots, g-1,\ {\rm and} \ U := [U_0, U_1,\ldots, U_{g-1}].
\end{equation}

We have the following result for the CIRR method.
\begin{theorem}\label{Th:4-7-1}
Let the eigenvalues inside $\Gamma$ be $\lambda_1, \ldots, \lambda_l$, then the number of eigenvalues of (\ref{eq:1-1}) inside $\Gamma$ is $s$, counting multiplicity. If $\rank(U)=s$, then we have
\begin{equation}\label{eq:4-4-2}
{\rm span}\{U\}= {\rm span} \{S_{(:, 1:s)}\}.
\end{equation} 
\end{theorem}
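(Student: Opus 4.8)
The plan is to understand the column space of $U$ by combining the explicit formula (\ref{eq:2-19-1}) for $F_k$ with the block-column structure of $S$, and then to show that multiplying by the random matrix $Y$ preserves the full column space under the rank hypothesis. First I would substitute (\ref{eq:2-19-1}) into the definition (\ref{eq:4-3-1}) of $U_k$. Writing $W := (S^{-1})_{(1:s,:)}$ and $\widehat{J} := J_{(1:s,1:s)}$, each block becomes $U_k = S_{(:,1:s)}\,\widehat{J}^{\,k}\,W\,Y$. Hence every column of every $U_k$ lies in ${\rm span}\{S_{(:,1:s)}\}$, which immediately gives the inclusion ${\rm span}\{U\}\subseteq{\rm span}\{S_{(:,1:s)}\}$. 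Since $S_{(:,1:s)}$ is a submatrix of the nonsingular $S$, its $s$ columns are linearly independent, so ${\rm span}\{S_{(:,1:s)}\}$ has dimension exactly $s$.

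Next I would use the rank assumption to force equality. The inclusion above means every column of $U$ is an $S_{(:,1:s)}$-combination, so $\dim({\rm span}\{U\})\le s$. The hypothesis $\rank(U)=s$ says $\dim({\rm span}\{U\})=s$. A subspace of dimension $s$ contained in another subspace of dimension $s$ must coincide with it, which yields (\ref{eq:4-4-2}). This argument is clean precisely because both sides have the same known dimension, so I would lean on the dimension count rather than trying to exhibit an explicit basis transformation.

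Organizing the columns of $U$ more explicitly will make the structure transparent and also clarify \emph{why} $\rank(U)=s$ is the natural and (generically) attainable hypothesis. I would factor the full matrix as
\begin{equation*}
U = S_{(:,1:s)}\,\bigl[\,WY,\ \widehat{J}\,WY,\ \ldots,\ \widehat{J}^{\,g-1}WY\,\bigr].
\end{equation*}
The right factor is an $s\times(hg)$ block-Krylov-type matrix generated by $\widehat{J}$ acting on the $s\times h$ block $WY$. Because $S_{(:,1:s)}$ has full column rank $s$, we get $\rank(U)=\rank\bigl([\,WY,\ \widehat{J}\,WY,\ \ldots,\ \widehat{J}^{\,g-1}WY\,]\bigr)$, and the theorem's hypothesis is exactly that this block-Krylov matrix attains its maximal possible rank $s$. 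This reformulation is useful because it shows the inclusion and the dimension bound transparently, and it connects the abstract rank condition to the reachability of the eigenspace by the contour-integral moments.

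The main obstacle is not the equality (\ref{eq:4-4-2}) itself, which follows from the dimension argument once the formula for $F_k$ is in hand, but rather ensuring that the count $s=d_1+\cdots+d_l$ and the projector-like structure of $F_0$ are used correctly: one must confirm that (\ref{eq:2-19-1}) genuinely isolates exactly the leading $s$ columns of $S$ associated with the eigenvalues inside $\Gamma$, so that $S_{(:,1:s)}$ really is the eigenspace of interest and no spurious outside eigendirections leak in. I would therefore treat the residue-theorem derivation of (\ref{eq:2-19-1}) (quoted from \cite{ISN10}) as the crux to state carefully, and then regard the passage from $F_k$ to $U$ and the final equality as the comparatively routine linear-algebra conclusion.
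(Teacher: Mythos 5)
Your proof is correct and follows essentially the same route as the paper: both factor $U = S_{(:,1:s)}E$ with $E$ the block-Krylov matrix built from $(S^{-1})_{(1:s,:)}Y$ and powers of $J_{(1:s,1:s)}$, and then use the hypothesis $\rank(U)=s$ to upgrade the obvious inclusion ${\rm span}\{U\}\subseteq{\rm span}\{S_{(:,1:s)}\}$ to equality. Your dimension-count phrasing and the paper's ``$E$ is full-rank'' phrasing are the same linear-algebra step in two guises, so there is nothing substantive to distinguish the two arguments.
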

\begin{proof} 
By (\ref{eq:2-19-1}) and (\ref{eq:4-3-1}), we know that
\begin{equation}\label{eq:4-7-2}
U = S_{(:,1:s)} E, 
\end{equation}
where 
\begin{equation}\label{eq:4-9-1}
 E =\left[(S^{-1})_{(1:s,:)}Y, J_{(1:s, 1:s)}(S^{-1})_{(1:s,:)}Y,\ldots,  (J_{(1:s, 1:s)})^{g-1}(S^{-1})_{(1:s,:)}Y \right].
\end{equation}
Since the rank of $U$ is $s$, we have that $E$ is full-rank, following from which the expression (\ref{eq:4-4-2}) holds. \end{proof}

According to Theorem \ref{Th:4-7-1},  we know that ${\rm span}\{U\}$ contains the eigenspace corresponding to the desired eigenvalues. The block CIRR method uses the well-known orthogonal projection technique to extract the eigenpairs inside $\Gamma$ from ${\rm span}\{U\}$, i.e., imposing the Ritz-Galerkin condition:
\begin{equation}\label{eq:4-7-5}
A{\bf x}-\lambda B{\bf x}\  \bot\  {\rm span}\{U\},
\end{equation}
where $\lambda\in \mathbb{C}$ and ${\bf x}\in {\rm span}\{U\}$.

The main task of the block CIRR method is to evaluate $U_k$ (cf. (\ref{eq:4-3-1})). In practice, $U_k$ have to be computed approximately by a numerical integration scheme:
\begin{equation}\label{eq:3-29-1}
U_k  \approx \tilde{U}_k =\dfrac{1}{2\pi \sqrt{-1}}\sum^{q}_{j=1}\omega_jz_j^{k}(z_j  B- A)^{-1} BY, \quad k = 0, 1,\ldots, g-1,
\end{equation}
where $z_j$ are the integration points and $\omega_j$ are the corresponding weights.
From (\ref{eq:3-29-1}), it is easy to see that the dominant work of the block CIRR method is actually solving $q$ generalized shifted linear systems of the form
\begin{equation}\label{eq:3-29-2}
(z_j B-A)X_j = BY, \quad j = 1, 2,\ldots, q.
\end{equation}
Noticing that the integration points $z_j$ and the columns of right-hand sides are independent, the CIRR method can be easily implemented in modern distributed parallel computer.

The complete block CIRR method is summarized as follows.

\vskip3mm
\hrule
\hrule
\vskip1mm
\noindent Algorithm 1: The block CIRR method
\vskip1mm
\hrule
\hrule
\vskip1mm
\begin{tabbing}
x\=xxx\= xxx\=xxx\=xxx\=xxx\=xxx\kill
{\bf Input}: $h, g, q, Y\in \mathbb{C}^{n\times h}$. \\
{\bf Output}: Approximate eigenpairs $(\hat{\lambda}_i, \hat{{\bf x}}_i)$, $\hat{\lambda}_i$ inside $\Gamma$.\\
\>1.\> Compute $\tilde{U}_k, k = 0, 1,\ldots, g-1,$ approximately by (\ref{eq:3-29-1}).\\
\>2.\> Compute the singular value decomposition of $\tilde{U} = [\tilde{U}_0,\ldots, \tilde{U}_{g-1}]: \tilde{U} =\hat{U}\Sigma\hat{V}$.\\ 
\>3.\> Set $\hat{A}=  \hat{U}^* A  \hat{U}$ and $\hat{B}= \hat{U}^* B  \hat{U}$.\\
\>4.\> Solve the generalized eigenproblem of size $hg$: $\hat{A} {{\bf y}}=\hat{\lambda}  \hat{B} {{\bf y}}$,
to obtain the \\
\>\>  eigenpairs $\{(\hat{\lambda}_i, {\bf y}_i)\}_{i = 1}^{hg}$.\\
\>5.\> Compute $\hat{\bf {x}}_i=  \hat{U}{{\bf y}}_i$, and select $s$ approximate eigenpairs $(\hat{\lambda}_i, \hat{\bf {x}}_i)$.
\end{tabbing}
\vskip1mm
\hrule
\hrule
 \vskip3mm

%We can see that, for Algorithm 1, in order to choose appropriate parameters $l$ and $m$, it requires to know the information about the number $s$ and the degrees of degeneracy of the desired   eigenvalues before using it. In a recent article \cite{SFT}, Sakurai {\it et al.} gave a method to determine an upper bound for $s$; we will describe the method in next section. Also in \cite{SFT}, the authors suggested to perform iterative refinement in the case some residuals of the computed eigenpairs by
%Algorithm 1 is not small enough for a given tolerance, see \cite{SFT} for details.

\section{A contour-integral based QZ algorithm} 
The contour-integral based methods are recent efforts for the eigenvalue problems. The CIRR method is a typical example among the methods of this kind. According to the brief description in the previous section, the basic idea of the block CIRR method can be summarized as follows: (i) constructing a particular subspace that contains the desired   eigenspace by means of a sequence of contour integrals (cf. (\ref{eq:2-2-21})), and (ii) using the orthogonal projection technique, with respect to the subspace ${\rm span}\{U\}$ (cf. (\ref{eq:4-3-1})), to extract the desired eigenpairs.  In this section, we will derive another contour-integral based eigensolver. The idea stems from the attempt to use the oblique projection technique to extract desired eigenvalues in the block CIRR method. Applying the oblique projection method, the key step is finding a suitable left subspace.  We find an appropriate left subspace via using the QZ method to generate a generalized Schur decomposition associated with the desired eigenvalues. This intention finally leads us to a contour-integral based QZ method for solving (\ref{eq:1-1}).  We call the resulting algorithm CIQZ for ease of reference.

In this section, we first detail the derivation of our contour-integral based QZ method. Later on, we discuss some implementation issues that our contour-integral based QZ method may encounter in the practical application, after that, we give the complete CIQZ method.

\subsection{The derivation of the CIQZ algorithm}
The CIRR method uses the orthogonal projection technique to extract the sought-after eigenpairs from ${\rm span}\{U\}$. Here we consider using the oblique projection technique \cite{BDDRV00, saad}, another class of projection method, to compute the desired eigenpairs. 

Since ${\rm span}\{U\}$ contains the eigenspace of interest,  it is natural to choose ${\rm span}\{U\}$ as the right subspace (or \emph{search subspace}).  The oblique projection technique extracts the desired eigenpairs from ${\rm span}\{U\}$ by imposing the Petrov-Galerkin condition, which requires orthogonality with respect to some left subspace (or \emph{test subspace}), say, ${\rm span}\{W\}$:
\begin{equation}\label{eq:3-22-1}
A{\bf x}-\lambda B{\bf x}\  \bot\  {\rm span}\{W\},
\end{equation}
where $\lambda$ is located inside $\Gamma$, ${\bf x} \in {\rm span}\{U\}$, and $W$ is an $n\times s$ orthogonal matrix. Let $V$ be an $n\times s$ matrix whose columns form an orthogonal basis of ${\rm span}\{U\}$.  The orthogonality condition (\ref{eq:3-22-1}) leads to
the projected eigenproblem
\begin{equation}\label{eq:3-22-2}
W^*AV{\bf y}=\lambda W^*BV{\bf y},
\end{equation}
where ${\bf y}\in \mathbb{C}^{s}$ satisfies ${\bf x} = V{\bf y}$.

Now our task is to seek an appropriate left subspace ${\rm span}\{W\}$. Our discussion begins with a partial generalized Schur form for matrix pair $(A, B)$.
\begin{definition}[\cite{FSV98}]
A partial generalized Schur form of dimension $s$ for a matrix pair $(A, B)$ is the decomposition
\begin{equation}\label{eq:3-20-1}
AQ_s = Z_sH_s, \quad BQ_s = Z_sG_s,
\end{equation}
where $Q_s$ and $Z_s$ are orthogonal $n\times s$ matrices, and $H_s$ and $G_s$ are upper triangular $s\times s$  matrices. A column $(Q_s)_{(:, i)}$ is referred to as a generalized Schur vector, and we refer to a pair $((Q_s)_{(:, i)}, (H_s)_{(i, i)}/(G_s)_{(i, i)})$ as a generalized Schur pair.
\end{definition}

The formulation (\ref{eq:3-20-1}) is equivalent to 
\begin{equation}\label{eq:3-20-2}
(Z_s)^*AQ_s = H_s, \quad (Z_s)^*BQ_s = G_s,
\end{equation}
from which we know that $(H_s)_{(i, i)}/(G_s)_{(i, i)}$ are the eigenvalues of $(H_s, G_s)$. Let ${\bf y}_i$ be the eigenvectors of pair $(H_s, G_s)$ associated with $(H_s)_{(i, i)}/(G_s)_{(i, i)}$, then we have $( (H_s)_{(i, i)}/(G_s)_{(i, i)}, Q_s {\bf y}_i)$ are the eigenpairs of $(A, B)$ \cite{FSV98, MS73}.

Applying the QZ algorithm to (\ref{eq:3-22-2}) to yield generalized Schur form 
\begin{equation}\label{eq:3-20-3}
(P_L)^*(W^*AV)P_R = H_A \quad and \quad (P_L)^*(W^*BV)P_R = H_B,
\end{equation}
where $P_R$ and $P_L$ are orthogonal $s\times s$ matrices,  $H_A$ and $H_B$ are upper triangular $s\times s$ matrices. The eigenvalues of pair $(W^*AV, W^*BV)$ are $\{(H_A)_{(i, i)}/(H_B)_{(i, i)}\}_{i}^s$ \cite{gvl, MS73}.

Comparing (\ref{eq:3-20-2}) with (\ref{eq:3-20-3}), it is readily to see that we have constructed a partial generalized Schur form in (\ref{eq:3-20-3}) for matrix pair $(A, B)$: $VP_R$ constructs a $Q_s$ and $WP_L$ constructs a $Z_s$. 

Since the desired eigenvalues are finite,  the diagonal entries of $H_A$ and $H_B$ are non-zero, which means that $H_A$ and $H_B$ are nonsingular.  In view of (\ref{eq:3-20-3}), we can conclude that 
\begin{equation}\label{eq:4-10-1}
{\rm span}\{WP_L\}={\rm span}\{AVP_R\}={\rm span}\{BVP_R\}.
\end{equation}
On the other hand, since $P_L$ and $P_R$ are nonsingular, we have 
\begin{equation}\label{eq:3-27-1}
{\rm span}\{W\}={\rm span}\{AV\}={\rm span}\{BV\}.
\end{equation}
Motivated by (\ref{eq:3-27-1}), we choose the left subspace ${\rm span}\{W \}$ to be ${\rm span}\{AU+BU \}$. Below we want to justify this choice.

\begin{theorem}\label{Th:2-13-1}
Let $L, D \in \mathbb{C}^{n\times t}, t\ge s$, be arbitrary matrices, and $R = F_0D$. A projected matrix pencil $z\hat{B}-\hat{A}$ is defined by $\hat{B}=L^*BR$ and $\hat{A}=L^*AR$. If ranks of both $L^*(T^{-1})_{(:,1:s)}$ and $(S^{-1})_{(1:s, :)}D$ are $s$, then the eigenvalues of $z\hat{B}-\hat{A}$ are $\lambda_1, \ldots, \lambda_l$, i.e., the eigenvalues that are located inside $\Gamma$.  
\end{theorem}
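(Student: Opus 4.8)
The plan is to reduce the projected pencil to the $s\times s$ Jordan block $J_{(1:s, 1:s)}$ by combining the Weierstrass form (\ref{eq:8-3-1}) with the closed form (\ref{eq:2-19-1}) of the moments $F_k$. First I would specialize (\ref{eq:2-19-1}) to $k=0$, giving $F_0 = S_{(:,1:s)}(S^{-1})_{(1:s,:)}$, so that
\begin{equation*}
R = F_0 D = S_{(:,1:s)} M, \qquad M := (S^{-1})_{(1:s,:)} D \in \mathbb{C}^{s\times t}.
\end{equation*}
By hypothesis $\rank(M)=s$, and since $t\ge s$ this means $M$ has full row rank.

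Next I would compute $AS_{(:,1:s)}$ and $BS_{(:,1:s)}$ from the canonical form. Writing $AS = T^{-1}\,\mathrm{diag}(J_d, I_{n-d})$ and $BS = T^{-1}\,\mathrm{diag}(I_d, N_{n-d})$, and using that $J_d$ is block diagonal with the Jordan blocks of $\lambda_1,\ldots,\lambda_l$ occupying its leading $s\times s$ corner (cf. (\ref{equ:7-17-1})), one obtains the two identities
\begin{equation*}
AS_{(:,1:s)} = (T^{-1})_{(:,1:s)}\,J_{(1:s, 1:s)}, \qquad BS_{(:,1:s)} = (T^{-1})_{(:,1:s)}.
\end{equation*}
Substituting these together with $R = S_{(:,1:s)}M$ into $\hat A = L^*AR$ and $\hat B = L^*BR$ yields the factorizations $\hat A = P\,J_{(1:s, 1:s)}\, M$ and $\hat B = P M$, where $P := L^*(T^{-1})_{(:,1:s)}\in\mathbb{C}^{t\times s}$ has full column rank $s$ by the first hypothesis. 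Consequently
\begin{equation*}
z\hat B - \hat A = P\bigl(zI_s - J_{(1:s, 1:s)}\bigr)M .
\end{equation*}

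It then remains to read off the eigenvalues of this factored pencil. When $t=s$ the factors $P$ and $M$ are square and nonsingular, so $z\hat B-\hat A$ is a regular pencil strictly equivalent to $zI_s - J_{(1:s, 1:s)}$, whose eigenvalues are exactly the diagonal entries of $J_{(1:s, 1:s)}$, i.e.\ $\lambda_1,\ldots,\lambda_l$ repeated according to the Jordan structure (\ref{equ:7-17-1}).

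The main obstacle is the case $t>s$, in which $\hat A$ and $\hat B$ both have rank $s<t$, so $z\hat B-\hat A$ is a \emph{singular} pencil and ``eigenvalue'' must be understood through its finite (regular) part rather than through $\det(z\hat B-\hat A)$, which vanishes identically. To handle this I would introduce a left inverse $P^{+}$ with $P^{+}P=I_s$ and a right inverse $M^{+}$ with $MM^{+}=I_s$ (these exist precisely because $\rank(P)=\rank(M)=s$), and compress the pencil as $P^{+}(z\hat B-\hat A)M^{+} = zI_s - J_{(1:s, 1:s)}$. Since pre- and post-multiplication by the full-rank factors $P$ and $M$ only introduces the singular Kronecker blocks associated with the $(t-s)$-dimensional kernels and leaves the finite elementary divisors untouched, the finite eigenvalues of $z\hat B-\hat A$ coincide with those of $zI_s - J_{(1:s, 1:s)}$, namely $\lambda_1,\ldots,\lambda_l$. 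Making this last identification rigorous---rather than merely observing that $z\hat B-\hat A$ drops rank exactly at $z\in\{\lambda_1,\ldots,\lambda_l\}$---is the delicate point, and I would phrase it via the Kronecker canonical form of the factored pencil.
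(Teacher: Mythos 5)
Your proof is correct and follows essentially the same route as the proof the paper defers to (Theorem 4 of \cite{IS10}, adapted to the $B$-weighted moments): both reduce the projected pencil to the factorization $z\hat{B}-\hat{A} = L^*(T^{-1})_{(:,1:s)}\,\bigl(zI_s - J_{(1:s,1:s)}\bigr)\,(S^{-1})_{(1:s,:)}D = P\bigl(zI_s - J_{(1:s,1:s)}\bigr)M$ and read the spectrum off the middle factor. Your handling of the singular case $t>s$ is also the intended one, and can be closed in one line: extend $P$ to a nonsingular $t\times t$ matrix and $M$ likewise, which shows $z\hat{B}-\hat{A}$ is strictly equivalent to $\mathrm{diag}\bigl(zI_s - J_{(1:s,1:s)},\,0_{t-s}\bigr)$, whose finite (regular-part) eigenvalues are exactly $\lambda_1,\ldots,\lambda_l$.
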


The proof is almost identical with that of Theorem 4 in \cite{IS10}, where the contour integrals $F_k$ were defined as $\frac{1}{2\pi \sqrt{-1}}\oint_{\Gamma}z^k (z B- A)^{-1} dz$, that is, the term $B$ was dropped comparing with the expression (\ref{eq:2-19-1}).

Theorem \ref{Th:2-13-1} says that the desired   eigenvalues $\{\lambda_i\}_{i=1}^l$ can be solved via computing the eigenvalues of projected eigenproblem $z\hat{B}-\hat{A}$, if the ranks of both $L^*(T^{-1})_{(:,1:s)}$ and $(S^{-1})_{(1:s, :)}D$ are $s$. Due to this, we want to show the following results.

\begin{theorem}\label{Th:4-9-1}
If the rank of $U$ is $s$, then the ranks of $( AU+ BU)^*(T^{-1})_{(:,1:s)}$ and $(S^{-1})_{(1:s, :)}U$ are $s$.
\end{theorem}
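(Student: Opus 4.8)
The plan is to exploit the factorization $U=S_{(:,1:s)}E$ established in the proof of Theorem~\ref{Th:4-7-1} together with the Weierstrass form (\ref{eq:8-3-1}), so that each of the two rank computations collapses into a product of a full-rank matrix $E$ with an explicitly understood $s\times s$ factor. Throughout I use that the hypothesis $\rank(U)=s$ forces $E$ (defined in (\ref{eq:4-9-1})), which is $s\times hg$, to have full row rank $s$.

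I would dispose of the claim on $(S^{-1})_{(1:s,:)}U$ first, since it is immediate. Substituting $U=S_{(:,1:s)}E$ gives $(S^{-1})_{(1:s,:)}U=(S^{-1})_{(1:s,:)}S_{(:,1:s)}E$, and the matrix $(S^{-1})_{(1:s,:)}S_{(:,1:s)}$ is precisely the leading $s\times s$ block of $S^{-1}S=I_n$, hence equals $I_s$. Therefore $(S^{-1})_{(1:s,:)}U=E$, which has rank $s$.

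For the factor $(AU+BU)^*(T^{-1})_{(:,1:s)}$, I would write $AU+BU=(A+B)U$ and invert (\ref{eq:8-3-1}) to obtain $A+B=T^{-1}MS^{-1}$ with $M=\mathrm{diag}\big(J_d+I_d,\;I_{n-d}+N_{n-d}\big)$. Since $S^{-1}U=S^{-1}S_{(:,1:s)}E=\begin{bmatrix}E\\0\end{bmatrix}$ and $J_d$ is block diagonal with its first $s=d_1+\cdots+d_l$ columns ($s\le d$) confined to its leading $s\times s$ block, a short computation yields $(A+B)U=(T^{-1})_{(:,1:s)}K$, where $K=\big(J_{(1:s,1:s)}+I_s\big)E$. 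Consequently $(AU+BU)^*(T^{-1})_{(:,1:s)}=K^*\big[(T^{-1})_{(:,1:s)}^*(T^{-1})_{(:,1:s)}\big]$. The bracketed matrix is the Gram matrix of the first $s$ columns of the nonsingular matrix $T^{-1}$, hence Hermitian positive definite and invertible; and $K^*$ has rank $s$ provided $K$ does. Multiplying a full-column-rank matrix by an invertible one preserves rank, so the product has rank $s$.

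The main obstacle is verifying that $K$ actually has full rank $s$, which reduces to the nonsingularity of the upper triangular factor $J_{(1:s,1:s)}+I_s$. Its diagonal entries are $\lambda_i+1$, so it is invertible exactly when $-1$ is not among the target eigenvalues $\lambda_1,\dots,\lambda_l$. I expect this to be the delicate point: it holds generically and can be guaranteed by the placement of $\Gamma$ (or more robustly by replacing $AU+BU$ with a generic combination $\alpha AU+\beta BU$), and once it is secured both rank identities follow from the elementary factorizations above.
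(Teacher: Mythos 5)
Your handling of $(S^{-1})_{(1:s,:)}U=E$ is exactly the paper's first step, but for the factor $(AU+BU)^*(T^{-1})_{(:,1:s)}$ you take a genuinely different, and more transparent, route. The paper never computes $AU+BU$ explicitly through the Weierstrass form. Instead it invokes ${\rm span}\{AU\}={\rm span}\{BU\}$ (citing (\ref{eq:3-27-1})) to write $AU=BU\Delta$ with $\Delta$ nonsingular, observes that $(T^{-1})_{(:,1:s)}=BS_{(:,1:s)}$, and reduces the claim to the identity $((T^{-1})_{(:,1:s)})^*(AU+BU)=(BS_{(:,1:s)})^*BS_{(:,1:s)}E(\Delta+I)$, concluding full rank from the nonsingularity of the Gram matrix $(BS_{(:,1:s)})^*BS_{(:,1:s)}$. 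Your explicit factorization $AU+BU=(T^{-1})_{(:,1:s)}\,(J_{(1:s,1:s)}+I_s)\,E$ buys two things: it avoids the span-equality step altogether (a step which itself silently requires $J_{(1:s,1:s)}$ to be nonsingular, i.e.\ $0\notin\{\lambda_1,\dots,\lambda_l\}$), and it makes visible exactly where the rank can drop.

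Moreover, the obstacle you flag is real, and the paper does not overcome it --- it overlooks it. Nonsingularity of $\Delta$ does not imply that $E(\Delta+I)$ has full row rank; indeed, from $E\Delta=J_{(1:s,1:s)}E$ one gets $E(\Delta+I)=(J_{(1:s,1:s)}+I_s)E$, so the paper's final ``full rank'' step is, in disguise, precisely the assertion that $J_{(1:s,1:s)}+I_s$ is nonsingular. If $-1$ happens to be an eigenvalue inside $\Gamma$, then $AU+BU$ genuinely has rank less than $s$, so the theorem as stated fails and no proof can close that hole. Your proof therefore establishes as much as the paper's does, under a hypothesis ($-1\notin\{\lambda_1,\dots,\lambda_l\}$) that you state explicitly while the paper assumes it tacitly; your suggested remedy of replacing $AU+BU$ by a generic combination $\alpha AU+\beta BU$ (or constraining the placement of $\Gamma$) is the natural way to repair the statement.
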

\begin{proof}
We first show that the rank of $(S^{-1})_{(1:s, :)}U$ is $s$. By (\ref{eq:8-3-1}) and (\ref{eq:4-7-2}), we have
\begin{equation}\label{eq:4-7-4}
(S^{-1})_{(1:s, :)}U = (S^{-1})_{(1:s, :)}S_{(:, 1:s)} E = E.
\end{equation}
Since $U$ is full-rank, by (\ref{eq:4-7-2}), we know that $\rank (E) = s$. Therefore, the rank of $(S^{-1})_{(1:s, :)}U$ is $s$.

Next we show that the rank of $( AU+ BU)^*(T^{-1})_{(:,1:s)}$ is $s$. For convenience, we turn to show that the rank of $((T^{-1})_{(:,1:s)})^*( AU+ BU)$, i.e., the conjugate transpose of $( AU+ BU)^*(T^{-1})_{(:,1:s)}$, is $s$. 

Since ${\rm span}\{AU\}={\rm span}\{BU\}$ (cf. (\ref{eq:3-27-1})), there exists a $hg \times hg$ nonsingular matrix $\Delta$ such that $AU = BU\Delta$. 
According to (\ref{eq:8-3-1}), (\ref{eq:2-19-1}),  and (\ref{eq:4-7-2}), we have
\begin{equation}\label{eq:3-26-1}
((T^{-1})_{(:,1:s)})^*( AU+ BU) = (BS_{(:, 1:s)})^*BS_{(:, 1:s)}E( \Delta+ I_s).
\end{equation}
In view of (\ref{eq:8-3-1}), we know $BS_{(:, 1:s)}$ is full-rank, which means $(BS_{(:, 1:s)})^*BS_{(:, 1:s)}$ is nonsingular. By (\ref{eq:3-26-1}), we can conclude that $(T^{-1})_{(:,1:s)})^*( AU+ BU)$ is full rank, thus the rank of $(AU+ BU)^*(T^{-1})_{(:,1:s)}$ is $s$.
\end{proof}

Based on Theorem \ref{Th:2-13-1} and Theorem \ref{Th:4-9-1}, we have that the eigenvalues of $(( AU+ BU)^*AU,( AU+ BU)^*BU)$ are  the eigenvalues of (\ref{eq:1-1}) inside $\Gamma$, which justifies our choice of taking the left subspace to be ${\rm span}\{AU+ BU\}$. On the other hand, the columns of $V$ and $W$ form the base of ${\rm span}\{AU+ BU\}$ and ${\rm span}\{U\}$, respectively.  As a consequence, there exist $hg\times hg$ nonsingular matrices $P_1$ and $P_2$, such that
\begin{equation}\label{eq:6-15-1}
( AU+ BU)P_1 = [W, 0],\quad UP_2 = [V, 0].
\end{equation}
 Now, we have 
\begin{equation}\label{eq:6-15-2}
P^*_1(z( AU+ BU)^*BU- ( AU+ BU)^*AU)P_2=\begin{bmatrix}
   zW^*BV-W^*AV   &  0  \\
    0  &  0
\end{bmatrix}.
\end{equation}
Therefore, $(W^*AV, W^*BV)$ shares the same eigenvalues with $(( AU+ BU)^*AU,( AU+ BU)^*BU)$, which are $\{(H_A)_{(i, i)}/(H_B)_{(i, i)}\}_{i=1}^s$ by (\ref{eq:3-20-3}).  Let  $((H_A)_{(i, i)}/(H_B)_{(i, i)}, \tilde{{\bf y}}_i)$ be the eigenpairs of $(H_A, H_B)$, then according to (\ref{eq:3-20-2}) and (\ref{eq:3-20-3}), we have that $((H_A)_{(i, i)}/(H_B)_{(i, i)}, VP_R\tilde{{\bf y}}_i)$ are exactly the eigenpairs of (\ref{eq:1-1}) inside $\Gamma$. 

We use the following algorithm to summarize the above discussion. 

\vskip3mm
\hrule
\hrule
\vskip1mm
\noindent Algorithm 2: A contour-integral based QZ algorithm.
\vskip1mm
\hrule
\hrule
\vskip1mm
\begin{tabbing}
x\=xxx\= xxx\=xxx\=xxx\=xxx\=xxx\kill
{\bf Input}: $h, g, q, Y\in \mathbb{C}^{n\times h}$. \\
{\bf Output}: Approximate eigenpairs $(\tilde{\lambda}_i, \tilde{{\bf x}}_i), i = 1, \ldots, s$.\\
\>1.\> Compute $\tilde{U}_k,  k =0, 1,\ldots, g-1, $ approximately by (\ref{eq:3-29-1}).\\
\>2.\> Form $\tilde{U} = [\tilde{U}_0, \tilde{U}_1, \ldots, U_{g-1}]$ and compute orthogonalization: \\
\>\> $V = \texttt{orth}(\tilde{U})$ and $W = \texttt{orth}(AV+BV)$.\\
\>3.\> Compute $\tilde{A}=  W^* A  V$ and $\tilde{B}=  W^* B  V$.\\
\>4.\> Compute $[ S_A, S_B, U_L, U_R, V_L, V_R] = \texttt{qz}(\tilde{A}, \tilde{B})$.\\
\>5.\> Compute $\tilde{\lambda}_i = (S_A)_{(i, i)}/(S_B)_{(i, i)}$ and $\tilde{{\bf x}}_i = V U_R(V_R)_{(:, i)}$.\\
\>6.\>  Select the approximate
eigenpairs $(\tilde{\lambda}_i, \tilde{{\bf x}}_i)$.
\end{tabbing}
\vskip1mm
\hrule
\hrule
 \vskip3mm

\subsection{The implementation issues}
If we apply Algorithm 2 to compute the eigenvalues inside $\Gamma$,  we will encounter some issues in practical implementation, just like other contour-integral based eigensolvers \cite{polizzi, ss, ST07}. In this section, we discuss the implementation issues of our new method.

The first issue we have to treat is about selecting a suitable size for the starting matrix $Y$, with a prescribed parameter $g$. Since $U$ (cf. \ref{eq:4-3-1}) is expected to span a subspace that contains the eigenspace of interest, we have to choose a parameter $h$, the number of columns of $Y$, such that $hg\geqslant s$, the number of eigenvalues inside $\Gamma$.  A strategy was proposed in \cite{SFT} for finding a suitable parameter $h$ for the block CIRR method. It starts with finding an estimation to $s$. Giving a positive integer $h_0$,  by ``$Y_{h_0}\sim \textsf{N}(0, 1)$", we mean $Y_{h_0}$ is an $n\times h_0$ matrix with i.i.d. entries drawn from standard normal distribution $\textsf{N}(0, 1)$. By (\ref{eq:2-19-1}) and (\ref{eq:4-3-1}), one can easily verify that the mean
\begin{equation}\label{eq:3-28-1}
\mathbb{E}[{\rm trace}((Y_{h_0})^*F_0Y_{h_0})]=h_0\cdot{\rm trace}(F_0)=h_0\cdot{\rm trace}(S_{(:, 1:s)}(S^{-1})_{(1:s, :)})=h_0\cdot s.
\end{equation}
Therefore, 
\begin{equation}\label{eq:4-12-1}
s_0 := \frac{1}{h_0}\cdot \mathbb{E}[{\rm trace}((Y_{h_0})^*F_0Y_{h_0})]
\end{equation}
gives an initial estimation to $s$ \cite{futa, YCY14}.  With this information on hand, the strategy in \cite{SFT} works as follows: (i) set $h=\lceil \frac{s_0 \kappa}{g}\rceil$, where $\kappa > 1$, (ii)  select the starting matrix $Y\in \mathbb{C}^{n\times h}$ and compute $\tilde{U}_k$ by (\ref{eq:3-29-1}), (iii) if the minimum singular value $\sigma_{\min}$ of $\tilde{U} = [\tilde{U}_0,\ldots, \tilde{U}_{g-1}]$ is small enough, we find a suitable $h$;  otherwise,  replace $h$ with $\kappa h$ and repeat (ii) and (iii). We observe that the formula (\ref{eq:4-12-1}) always gives a good estimation of $s$. However the computed $s_0$ may be much larger than $s$ in some cases, such as the matrices $A$ and $B$ are ill-conditioned, which leads to that it is potentially expensive to compute the singular value decomposition of $\tilde{U}$. Due to this fact, in our method we turn to use the strategy proposed in \cite{YCY14}, whose working mechanism is as follows: use the rank-revealing QR factorization \cite{tony, gvl} to monitor the numerical rank of $\tilde{U}$,
if $\tilde{U}$ is numerically rank-deficient, then it means that the subspace spanned by $\tilde{U}$ already contains the desired   eigenspace sufficiently, as a result, we find a suitable parameter $h$.

Another issue we have to address is designing the stopping criteria. The stopping criteria here include two aspects: (i) all computed approximate eigenpairs attain the prescribed accuracy, and (ii) all eigenpairs inside the given region are found. 

As for the first aspect of the stopping criteria, since we can only compute $U$ approximately by some quadrature scheme (cf. (\ref{eq:3-29-1})),  the approximate eigenpairs computed by Algorithm 2 may be unable to attain the prescribed accuracy in practical applications. A natural solution is to refine $\tilde{U}$  (step 2 in Algorithm 2)  iteratively. A refinement scheme was suggested in \cite{IDS15}. Let $\tilde{U}_0^{(0)}= Y$ and $l$ be a positive integer, the refinement scheme iteratively computes $U^{(l)}_k= F_k\tilde{U}^{(l-1)}_ 0$ by a $q$-point numerical integration scheme:
\begin{equation}\label{eq:4-13-1}
U^{(l)}_k  \approx \tilde{U}^{(l)}_k =\dfrac{1}{2\pi \sqrt{-1}}\sum^{q}_{j=1}\omega_jz_j^{k}(z_j  B- A)^{-1} B\tilde{U}^{(l-1)}_ 0, \quad k = 0, 1,\ldots, g-1,
\end{equation}
and then constructs 
\begin{equation}\label{eq:4-13-2}
\tilde{U}^{(l)}=\left[\tilde{U}^{(l)}_0, \tilde{U}^{(l)}_1,\ldots, \tilde{U}^{(l)}_{g-1}\right].
\end{equation}
The refined $\tilde{U}^{(l)}$ is used to form projected eigenproblem (\ref{eq:3-22-2}),  through which we compute the approximate eigenpairs. The accuracy of approximate eigenpairs will be improved as the iterations proceed, see \cite{SFT} for more details. 

If all $s$ approximate eigenpairs attain the prescribed accuracy after a certain iteration, we could stop the iteration process. However, in general we do not know the number of eigenvalues inside the target region in advance.  This fact leads to the second aspect of the stopping criteria: how to guarantee that all desired   eigenpairs are found when the iteration process stops.  We take advantage of the idea proposed in \cite{YCY14}. The rationale of the idea is that, as the iteration process proceeds, the accuracy of desired  eigenpairs will be improved while the spurious ones do not, as a result, there will exist a gap of accuracy between the desired  eigenpairs and the spurious ones \cite{YCY14}. Based on this observation, a test tolerance $\eta$, say $1.0\times 10^{-3}$, is introduced to discriminate between the desired eigenpairs and the spurious ones. Specifically, for approximate eigenpair $(\tilde{\lambda}_i, \tilde{{\bf x}}_i)$, define the corresponding residual norm as 
\begin{equation}\label{eq:4-14-1}
r_i = \dfrac{\|A\tilde{{\bf x}}_i-\tilde{\lambda}_i B\tilde{{\bf x}}_i\|}{\|A\tilde{{\bf x}}_i\|+\|B\tilde{{\bf x}}_i\|}.
\end{equation}
If  $r_i <\eta$, then we view $(\tilde{\lambda}_i, \tilde{{\bf x}}_i)$ as an approximation to a sought-after eigenpair and refer to it as a filtered eigenpair by $\eta$. If the numbers of  filtered eigenpairs are the same in two consecutive iterations, then we set them to be the number of eigenvalues inside $\Gamma$, see \cite{YCY14} for more details.
 
  From (\ref{eq:4-13-1}) we can see that, in each iteration, the dominate work is to compute $q$ generalized shifted linear systems of the form
\begin{equation}\label{eq:5-11-1}
(z_i B-A)X_i^{(l-1)} = B U_0^{(l-1)}, \quad i = 1, 2, \ldots, q.
\end{equation}

Integrating the above strategies with Algorithm 2, below we give the complete CIQZ algorithm for computing the eigenpairs inside the given region $\Gamma$. 

\vskip3mm
\hrule
\hrule
\vskip1mm
\noindent Algorithm 3: The complete CIQZ method 
\vskip1mm
\hrule
\hrule
\vskip1mm
\begin{tabbing}
x\=xxx\= xxx\=xxx\=xxx\=xxx\=xxx\kill
{\bf Input}: $A, B, h_0, g, q, \kappa, \eta, \epsilon, \mathrm{max\_iter}$. \\
{\bf Output}: Approximate eigenpairs $(\tilde{\lambda}_i, \tilde{{\bf x}}_i), i = 1, \ldots, s$.\\
\>1.\>Let $Y_{h_0}\sim \textsf{N}(0, 1)$, compute $\tilde{U}_k, k = 0, \ldots, g-1,$ by (\ref{eq:3-29-1}).\\
\>2.\> Compute $s_0 = \lceil \frac{1}{h_0}{\rm trace}((Y_{h_0})^*\tilde{U}_0)\rceil$, and set $h=\max\{ \lceil \frac{s_0 \kappa}{g}\rceil, h_0\}$.\\
\>3.\> If $h> h_0$\\
\>4.\>\> Pick $\check{Y}_{h-h_0} \sim \textsf{N}(0, 1)$ and compute $\check{U}_k$ by by (\ref{eq:3-29-1}). Augment $\check{U}_k$ \\
\>\>\> to $\tilde{U}_k$: $\tilde{U}_k = \left[\tilde{U}_k, \check{U}_k \right]\in \mathbb{C}^{n\times h}$  and construct $\tilde{U}=\left[\tilde{U}_0, \tilde{U}_1,\ldots, \tilde{U}_{g-1} \right]$. \\
\>5.\> Else\\
\>6.\>\> Set $h = h_0$ and construct $\tilde{U}=\left[\tilde{U}_0, \tilde{U}_1,\ldots, \tilde{U}_{g-1} \right]$.\\
\>7.\> End\\
\>8.\> Compute the rank-revealing QR factorization: $\tilde{U}= VR\Pi$. Set $s_1= \rank(R)$.\\
\>\>  If $s_1 < hg$, stop; otherwise, set $h_0 = h$, $h = \kappa h$ and go to step 3.\\
\>9.\> Set $e(0) = 0$ and $c(0) = n$.\\
\>10.\> For $k = 1, 2, \ldots, \mathrm{max\_iter}$\\
\>11.\>\> Compute the orthogonalization: $V = \texttt{orth}(\tilde{U})$ and $W =  \texttt{orth}( AV+ BV)$.\\
\>12.\>\> Compute $\tilde{A}=  W^*A  V$ and $\tilde{B}=  W^* B  V$. Set $s_1= \rank(\tilde{A})$.\\
\>13.\>\> Compute $[ S_A, S_B, U_L, U_R, V_L, V_R] = \texttt{qz}(\tilde{A}, \tilde{B})$.\\
\>14.\>\> Compute $\tilde{\lambda}_i = (S_A)_{(i, i)}/(S_B)_{(i, i)}$ and $\tilde{{\bf x}}_i = V U_R(V_R)_{(:, i)}, i = 1, \ldots, s_1$.\\
\>15.\>\> Set $r = [\ ], \Lambda^{(k)}= [\ ], X^{(k)}=[\  ]$, and $c(k) = 0$.\\
\>16.\> \> For $i = 1:s_1$\\
\>17.\> \>\> Compute $r_i = \|A\tilde{{\bf x}}_i -\tilde{\lambda}_i B \tilde{{\bf x}}_i\|/(\|A\tilde{{\bf x}}_i\| +\| B \tilde{{\bf x}}_i\|)$.\\
\>18.\> \>\> If $\tilde{\lambda}_i$ inside $\Gamma$ and $r_i < \eta$, then $c(k) = c(k)+1, r = [r, r_i]$,\\
\>\> \>\> $X^{(k)}=[X^{(k)}, \tilde{\bf x}_i]$ and $\Lambda^{(k)}=[\Lambda^{(k)}, \tilde{\lambda}_i]$.\\
\>19.\>\>End\\
\>20.\> \>Set $e(k) = \max(r)$.\\
\>21.\>\>If $c(k) = c(k-1)$ and $e(k)< \epsilon$, set $\tilde{\lambda}_i = (\Lambda^{(k)})_{i}, \tilde{\bf x}_i =  (X^{(k)})_{(:, i)}$. Stop.\\
\>22.\>\> Set $Y = \tilde{U}_0$, and compute $\tilde{U}_k$ by (\ref{eq:3-29-1}).  Construct $\tilde{U}=\left[\tilde{U}_0, \tilde{U}_1,\ldots, \tilde{U}_{g-1} \right]$.\\
\>23.\> End

\end{tabbing}
\vskip1mm
\hrule
\hrule
 \vskip3mm
 Here we give some remarks on Algorithm 3.\begin{itemize}
  \item [1.] Steps 1 to 8 are devoted to determining a suitable parameter $h$ for the starting matrix $Y$. Meanwhile, a matrix $\tilde{U}$ is also generated. 
  \item [2.] The for-loop, steps 16 to 19, is used to detect the spurious eigenvalues. Only the approximate eigenpairs whose residual norms are less than $\eta$ are retained.
  \item [3.] Step 21 refers to the stopping criteria, which contain two aspects: (i) the number of filtered eigenpairs by $\eta$ is the same with the one in the previous iteration, and (ii) the residual norms of all filtered eigenpairs are less than the prescribed tolerance $\epsilon$.
  \item [4.] By (\ref{eq:3-27-1}), theoretically, the left subspace can be chosen either ${\rm span}\{AU\}$ or ${\rm span}\{BU\}$. However, in practical implementation $U$ can only be computed  by a quadrature scheme to get an approximation $\tilde{U}$, in Algorithm 3 we choose the left subspace to be ${\rm span}\{A\tilde{U}+B\tilde{U}\}$ so as to include the information of both ${\rm span}\{A\tilde{U}\}$ and ${\rm span}\{B\tilde{U}\}$. \end{itemize}

%The essential difference between the CIRR method and our CIQZ method is that the later chooses a left subspace $\mathcal{W}$ which is different from the right subspace. The key is to choose an appropriate left subspace $\mathcal{W}$. In our CIQZ method, it is taken to be $W = \alpha AU +\beta BU$; Theorem \ref{Th:4-9-1} has justified our choice. On the other hand, with $U$ at hand, constructing $W$ is trivial. Consequently, just like the CIRR method, the main task is still to compute $U$ (\ref{eq:3-29-1}) approximately.

\section{Numerical Experiments}\label{sec:experiments}
In this section, we use some numerical experiments to illustrate the performance of our CIQZ method (Algorithm 3). The test problems are from the Matrix Market collection \cite{DGL89}. They are the real-world problems from scientific and engineering applications. The descriptions of the related matrices are presented in \textsc{Table} \ref{Tab:5-1}, where  \texttt{nnz} denotes the number of non-zero entries and cond denotes the condition numbers which are computed by Matlab function \texttt{condest}.
All computations are carried out in \textsc{Matlab} version R2014b on a MacBook with an Intel Core i5 2.5 GHz processor and 8 GB RAM.

\begin{table}
\caption{Test problems from Matrix Market that are used in our experiments.}
\footnotesize{
%{\newcommand{\q}[1]{\mc{1}{}{\small\tt #1}}
\noindent
\begin{tabular}{c|ll|cllc}
No.&Problem & Size &Matrix& \texttt{nnz} & Property & \texttt{cond}
\\ \hline
\multirow{2}{*}{1}& \multirow{2}{*}{\texttt{BFW398}}  & \multirow{2}{*}{$398$}& $A$ & $3678$&  real unsymmetric &$7.58\times 10^{3}$\\ 
&   &&$B$& $2910$& real symmetric indefinite &$3.64\times 10^{1}$\\ \hline
\multirow{2}{*}{2}& \multirow{2}{*}{\texttt{BFW782}} &  \multirow{2}{*}{$782$} & $A$ & $7514$& real unsymmetric &$4.63\times 10^{3}$\\
&  & & $B$& $5982$&  real symmetric indefinite &$3.05\times 10^{1}$\\ \hline
\multirow{2}{*}{3}& \multirow{2}{*}{\texttt{DWG961}} &  \multirow{2}{*}{$961$} & $A$ & $3405$& complex symmetric indefinite &Inf\\
&  & & $B$& $10591$&  complex symmetric indefinite &$3.21\times 10^{7}$\\ \hline
\multirow{2}{*}{4}& \multirow{2}{*}{\texttt{MHD1280}} &  \multirow{2}{*}{$1280$} & $A$ & $47906$& complex unsymmetric &$9.97\times 10^{24}$\\
&  & & $B$& $22778$& complex Hermitian &$5.99\times 10^{12}$\\ \hline
\multirow{2}{*}{5} & \multirow{2}{*}{\texttt{MHD3200}} & \multirow{2}{*}{$3200$}  &$A$ & 68026 &real unsymmetric & $2.02\times 10^{44}$\\
 &  && $B$ & 18316& real symmetric indefinite & $2.02\times 10^{13}$\\ \hline
\multirow{2}{*}{6}&\multirow{2}{*}{\texttt{MHD4800}} & \multirow{2}{*}{$4800$} &$A$& 102252& real unsymmetric & $2.54\times 10^{57}$\\
& & & $B$&  27520&real symmetric indefinite &$1.03\times 10^{14}$
\end{tabular}}
\label{Tab:5-1}
\end{table}

We use Gauss-Legendre quadrature rule with $q=16$ quadrature points on $\Gamma$ to compute the contour integrals (\ref{eq:4-13-1}) \cite{DR84}.
As for solving the generalized shifted linear systems of the form (\ref{eq:5-11-1}),
we first use the \textsc{Matlab} function \texttt{lu} to compute the LU
decomposition of $ A-z_j B, j =1 ,2,\ldots, q$, and then perform the triangular
substitutions to get the corresponding solutions. In the experiments, the size of sampling vectors $h_0$ and the parameter $g$ are taken to be $20$ and $5$, respectively.

\textsf{Experiment\ 4.1} The goal of this experiment is to show the convergence behavior of CIQZ. The  test problem is the bounded fineline dielectric waveguide generalized eigenproblem \texttt{BFW782} (cf. \textsc{Table} \ref{Tab:5-1}) \cite{DGL89}. It stems from a finite element discretization of the Maxwell equation for propagating modes and magnetic field profiles of a rectangular waveguide filled with dielectric and PEC structures \cite{BDDRV00}. We are interested in the eigenvalues inside the circle $\Gamma$ with center at $\gamma = -6.0\times 10^5$ and radius  $\rho = 2.0\times 10^5$. By using the \textsc{Matlab} function \texttt{eig} to compute all eigenvalues of the test problem in dense format, we find that there are $141$ eigenvalues within $\Gamma$.

 Define
\begin{equation}\label{eq:7-16-1}
\texttt{max\_r} = \max_{1\leq i \leq s} r_i, 
\end{equation}
where  $r_i$ are the residual norms given by  $ \|A\tilde{{\bf x}}_i-\tilde{\lambda}_i B\tilde{{\bf x}}_i\|/(\|A\tilde{{\bf x}}_i\|+\|B\tilde{{\bf x}}_i\|)$ and $(\tilde{\lambda}_i, \tilde{{\bf x}}_i)$ are the filtered eigenpairs. In our CIQZ method, we stop the iteration process when: (i) the numbers of filtered eigenpairs in two consecutive iterations are the same, and (ii) $\texttt{max\_r}$ in  current iteration is less than the prescribed tolerance $\epsilon$ (step 21 in Algorithm 3).
 
The left picture in \textsc{Fig} \ref{Fig:5-3}  depicts the numbers of  filtered eigenpairs in ten iterations. Recall that the filtered eigenpairs are the ones whose residual norms  are less than the test tolerance $\eta$. In the experiments, we take $\eta = 1.0\times 10^{-3}$. We see that in the first iteration there are 28 approximate eigenpairs whose residual norms are less than $\eta$. But from the second iteration to the last, the number of filtered eigenvalues in each iteration is 141, which is exactly the number of eigenvalues inside $\Gamma$.

\begin{figure}
\begin{center}
\includegraphics[width=13cm]{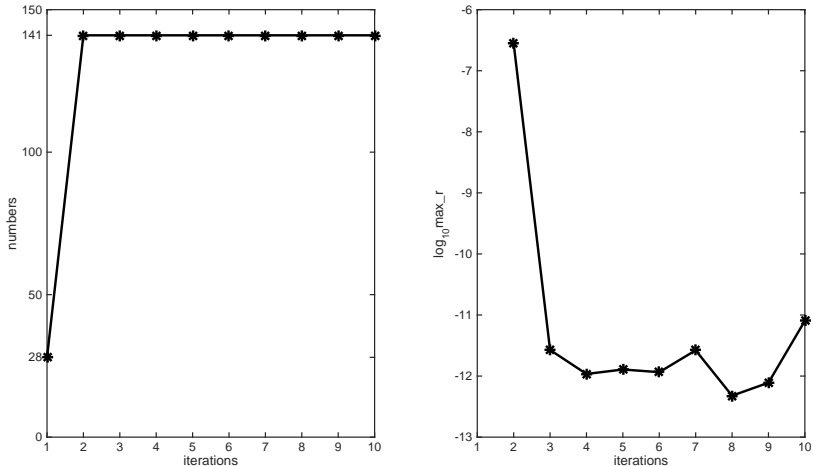}
\caption{The convergence behavior of CIQZ for test problem \texttt{BFW782}.}
\label{Fig:5-3}
\end{center}
\end{figure}

The right picture in  \textsc{Fig} \ref{Fig:5-3} shows the maximum of the residual norms of filtered eigenvalues, i.e., $\texttt{max\_r}$ (cf. (\ref{eq:7-16-1})), in each iteration. From the left picture, we know that the number of filtered eigenvalues attains the one of eigenvalues inside $\Gamma$ starts from the second iteration. Therefore, we plot $\texttt{max\_r}$ starting from the second iteration to the 10th. We see that $\texttt{max\_r}$ decreases monotonically and dramatically from the second iteration to the fourth,  maintains at almost the same level in the next three iterations, and rebounds from the eighth iteration.

\textsf{Experiment\ 4.2}
This experiment is devoted to showing the numerical performance of our CIQZ.  We compare the CIQZ method with \textsc{Matlab} built-in function \texttt{eig} and the  block CIRR method (\textsc{Block\_CIRR}). In  \cite{SFT}, the authors addressed some implementation problems of the block CIRR method, including the selection of the size of the starting vectors and iterative refinement scheme. In the experiment, as for the  block CIRR method,  we use the version proposed in \cite{SFT}.  Note that the dominate computation cost of both CIQZ and \textsc{Block\_CIRR} comes from solving $q = 16$ generalized linear shifted systems of the form (\ref{eq:5-11-1}). On the other hand, when using \texttt{eig} to compute the eigenvalues inside the target region, we have to first compute all eigenvalues in dense format and then select the target eigenvalues according to their coordinates. However, the matrices listed in \textsc{Table} \ref{Tab:5-1} are sparse. Therefore, for the sake of fairness, we compare the three methods only in terms of accuracy, and will not show the amount of CPU time taken by each method.  

\textsc{Block\_CIRR} and CIQZ are contour-integral based eigensolvers, the common parameters $h_0$ and $g$ we take to be $20$ and $ 5$, respectively. In \cite{SFT},  \textsc{Block\_CIRR} performs two iterative refinements, i.e., three iterations in total. For comparison, in the experiment, we set the convergence tolerance $\epsilon = 1.0\times 10^{-15}$ and ${\rm max\_iter} = 3$ for our CIQZ method. As a result, the results computed by CIQZ and \textsc{Block\_CIRR} will actually be those computed in the third iteration.

We use $\texttt{max\_r}$ (cf. (\ref{eq:7-16-1})) to measure the accuracy achieved by each of three methods.  In \textsc{Table} \ref{Tab:5-1}, $\gamma$ and $\rho$ represent the center and the radius of target circle $\Gamma$ respectively, and $s$ is the number of eigenvalues inside $\Gamma$. In the last three columns of \textsc{Table} \ref{Tab:5-1}, we display the $\texttt{max\_r}$s computed by all three methods for each of the six test problems. 

From \textsc{Table} \ref{Tab:5-1}, we see that for the two  contour-integral based eigensolvers, \textsc{Block\_CIRR} and CIQZ, the latter outperforms the former in all six test problems. Especially, as for the problem 7, whose matrices are ill-conditioned, \textsc{Block\_CIRR} fails to compute the desired eigenpairs. Therefore, our CIQZ method is more accurate and reliable than \textsc{Block\_CIRR}. When it comes to the comparison of \textsc{Matlab} function \texttt{eig} and CIQZ,  it is shown that the results  computed by the two methods agree almost the same digits of accuracy for the first three test problems; while our CIQZ method is more accurate than \texttt{eig} by around two digits of accuracy in the last three problems, whose matrices are ill-conditioned. We should point out that in the experiment our CIQZ method just runs three iterations, it may obtain more accurate results if it performs more iterations.

\begin{table}
\centering
\caption{Comparison of   \texttt{eig}, \textsc{Block\_CIRR}, and CIQZ.}
\footnotesize{
%{\newcommand{\q}[1]{\mc{1}{|l|}{\small\tt #1}}
\noindent
\begin{tabular}{c|cc|c|ccc}
No.&$\gamma$&$\rho$&$s$& \texttt{eig} &\textsc{Block\_CIRR}&CIQZ  \\ \hline
1& $-5.0\times 10^5$&$1.0\times 10^5$  & 58 &$2.57\times 10^{-13}$&$3.60\times 10^{-9}$&$ 4.76\times 10^{-13}$   \\
2&  $-6.0\times 10^{5}$&$2.0\times 10^{5}$  & 141&$5.59\times 10^{-12}$&$1.45\times 10^{-8}$&$ 2.09\times 10^{-12}$ \\
3&$-5.0\times 10^5$& $3.0\times 10^5$&143&$6.81\times 10^{-10}$& $1.41\times 10^{-6}$ & $5.85\times 10^{-10}$ \\
4&$-1.0\times 10^1$& $8.0$&72&$1.15\times 10^{-8}$&$ 5.23\times 10^{-8}$ &$7.00\times 10^{-10}$ \\
5&$-5.0\times 10^1$& $3.0\times 10^1$&137&$4.94\times 10^{-7}$&$9.77\times 10^{-6}$&$ 1.67\times 10^{-9}$ \\
6&$-5.0$& $3.0$&208&$1.99\times 10^{-6}$&---&$ 1.79\times 10^{-8}$ \\
\end{tabular}}
\label{Tab:5-2}
\end{table}

\section{Conclusions}  
In this paper, we present a new contour-integral based method for computing the eigenpairs inside a given region. Our method is based on the CIRR method. The main difference between the original CIRR method and our CIQZ method is the way to extract the desired eigenpairs. We establish the mathematical framework and address some implementation issues for our new method.  Numerical experiments show that our method is reliable and accurate.

\end{document}